\documentclass[12pt,notitlepage]{report}
\title{One sided conformal collars and the reflection principle}
\author{V. Liontou and V. Nestoridis}

\usepackage[english]{babel}
\usepackage[utf8]{inputenc}
\usepackage{titlesec}
\usepackage{amsthm}
\usepackage{amsmath}
\usepackage{amsfonts}
\usepackage{mathrsfs}
\usepackage{enumerate}
\usepackage{tikz}

\titleformat*{\section}{\normalsize\bfseries}
\titleformat*{\subsection}{\Large\bfseries}

\titleformat*{\section}{\normalsize\bfseries}
\titleformat*{\subsection}{\Large\bfseries}
\newtheorem{theorem}{Theorem}[chapter]
\numberwithin{theorem}{section}
\newtheorem{defi}[theorem]{Definition}
\newtheorem{lemma}[theorem]{Lemma}
 \newtheorem{corollary}{Corollary}[theorem]

\theoremstyle{definition}

\newtheorem{remark}[theorem]{Remark}

\addto\captionsenglish{}

\begin{document}
\maketitle

\begin{abstract}
If a Jordan curve $\sigma$ has a one-sided conformal collar with "good" properties, then, using the Reflection principle, we show that any other conformal collar of $\sigma$ from the same side has the same "good" properties. A particular use of this fact concerns analytic Jordan curves, but in general the Jordan arcs we consider do not have to be analytic. We show that if an one-sided conformal collar bounded by $\sigma$ is of class $A^{p}$,  then any other collar bounded by $\sigma$ and from the same side of $\sigma$  is of class $A^{p}$. 

\end{abstract}
A.M.S classification no: primary 30C99, 30E25 secondary30H10 \\
\break
Key words and phrases: Reflection principle, one sided conformal collar, analytic curves, $A^p$ spaces.
\section{Introduction}
In this paper, for two conformal mappings $\Phi$ and $F$,  we write $ F= \Phi \circ (\Phi^{-1}\circ F)$ whenever this has a meaning, as explained later. Using our assumptions, the function $\Phi^{-1}\circ F$ maps a circular arc $\tau$ on another circular arc. Thus, by Reflection $h=\Phi^{-1}\circ F$ extends to an injective holomorphic function on a neighbourhood $V$ of a part of $\tau$. Thus, $h^\prime$ extends continuously on $\tau$ and does not vanish at any point of $\tau$. Since $F=\Phi\circ h$  and $F^\prime= (\Phi^\prime \circ h) h^{\prime}$, if $\Phi$ or $\Phi^\prime$ has some good properties we can transfer them to $F$ and $F^\prime$.
The first application of this fact is described below.\\
Let $J$ be a Jordan curve in $\mathbb{C}$, that is a homeomorphic image of the unit circle $T$ defined by $\gamma :T\rightarrow J$. The Jordan curve $J$ is called analytic if there is a two sided conformal collar around it . That is, if there exists an annulus  $D(0,r_1,r_2)=\{z\in \mathbb{C}: r_1<|z|<r_2\}, 0<r_1<1<r_2$ and an injective holomorphic mapping $\phi: D(0,r_1,r_2)\rightarrow\mathbb{C}$ such that $\phi(e^{i\theta})=\gamma(e^{i\theta})$ for all $e^{i\theta}\in T$, $\theta \in \mathbb{R}$. A consequence of the existence of the above two sided conformal collar is that, if $\Omega$ is the Jordan region bounded by $J$ and $\Phi:D\rightarrow\Omega$ is a Riemann mapping from the open disk $D$ onto $\Omega$, then $\Phi$ has a conformal extension on a larger disc $D(0,r)=\{z\in\mathbb{C}: |z|<r\},r>1$. In particular, the function $\Phi$ extends continuously on the unit circle $T$ and the same holds for the derivative $\Phi^\prime$ and $\Phi^\prime(e^{i\theta})\neq 0$ for all $e^{i\theta}\in T$.
\\ In the present paper we investigate which conclusions still hold if we do not assume the existence of a two-sided conformal collar around the Jordan curve $J$ but we assume the existence of a one sided conformal collar bounded by $J$.
\\ Thus, we assume the existence of an injective holomorphic function $\phi: D(0,r,1)\rightarrow \mathbb{C}$, $ D(0,r,1)=\{z\in \mathbb{C}: r<|z|<1\}$ which extends continuously on $\overline {D(0,r,1)}$ and let us call $\phi$ the extension as well. We also assume that $\phi $ is one to one on $D(0,r,1)\cup T$ where $T=\{z\in \mathbb{C}:|z|=1\}$ and that $\phi_{|_T}:T\rightarrow J$ is a homeomorphism. Futhermore,we assume that $\phi^\prime$ has a continuous extension on $\overline{D(0,r,1)}$ and that $\phi^\prime(e^{i\theta})\neq 0$ for all $\theta \in \mathbb{R}$. Then we show that the Riemann map $\Phi :D\rightarrow \Omega $, onto the Jordan region $\Omega$ bounded by $J$ has similar properties; that is, $\Phi$ and $\Phi^\prime$ extend continuously on $\overline{D}$ and $\Phi^ \prime (e^{i\theta})\neq 0$ for all $\theta\in \mathbb{R}$. Certainly ,the fact that $\Phi$ extends continuously on $\overline{D}$ is a consequence of the Ozgood-Caratheodory theorem \cite{[6]}. The rest is a consequence of the Reflection Principle (\cite{[1]}). We also can extend the above fact for every finite number $ p+1$ of derivatives, provided that the one-sided conformal collar is of class $A^{p}$.(\cite{[3]},\cite{[5]}) Furthermore, instead of considering the unit circle we can replace it by one-sided free boundary arcs (\cite{[1]})which are analytic arcs and obtain similar results (see Th.2.2 below)
\\
 We also mention that our result relates to the considerations of (\cite{[2]})
\\
\break

Finally, a second application of the method of the present paper can be found in \cite{[7]}, see proof of Th. 4.2 where it is proven the following. If $\Omega$ is a domain in $\mathbb{C}$, whose boundary contains an analytic arc $J$ and the boundary values of a holomorphic function $f \in H(\Omega)$ define a function in $C^{p}(J)$, then the derivatives $f^{(l)}(z), z\in \Omega ,l=0,...,p $ extend continuously on J. 
\section{One sided conformal collar}
We start with the following definition.
\begin{defi}
A complex function $\phi(t)$ of a real variable $t$ defined on an interval $a<t<b$, defines an analytic curve if, for every $t_0$ in the  $(a,b)$,the derivative $\phi^\prime(t_0)$ exists and it is non zero and $\phi$ has a representation  $$\phi(t)=\sum_{n=0}^{\infty}\alpha_n(t_0)(t-t_0)^{n}, \alpha_n(t_0)\in \mathbb{C},$$ where the power series converges in some interval $(t_0-p,t_0+p), p=p(t_0) > 0$, and coincides with $\phi$ in this interval.
\end{defi}

 But if this is so, then from Abel's lemma, the series is also convergent for complex values of $t$, as long as $|t-t_0|<p$ and represents an analytic function on that disk. In overlapping disks the functions are the same, because they coincide on a segment of the real axis. Thus the function $\phi(t)$ can be extended as an analytic and injective function in a neighbourhood of $(a,b)$ in $\mathbb{C}$ .\cite{[1]}

\begin{theorem}
Let $W_1$ and $W_2$ be regions and $J_1$and $J_2$ be analytic arcs of the boundaries of the regions $W_1, W_2$ respectively. We also assume that there are neighbourhoods $U_1, U_2$ of $J_1, J_2$ respectively for which $U_1\cap W_1$ and $U_2\cap W_2$ are connected regions. For the analytic arcs $J_1$ and $J_2$ we assume that:
\begin{enumerate}
\item For every point of $J_1$ or $J_2$ there exists a neighbourhood whose intersection with the whole boundary of $W_1$ or $W_2$ respectively is the same with its intersection with $J_1$ or $J_2$.
\item For every point z in $J_i, i=1,2$ and for every disk $D(z,\epsilon)$ such that $D(z,\epsilon)\cap\partial W_i = D(z,\epsilon)\cap J_i$, the arc $J_i$ separates $D(z,\epsilon)$ in two subsets, one completely inside $W_i$ and the other completely outside $W_i$.
\item The arcs $J_i, i=1,2$ are simple analytic and the injective maps $\phi_1, \phi_2$ with $J_1 := \phi_1 :(a_1,b_1)\rightarrow \mathbb{C}$ and $J_2:= \phi_2:(a_2,b_2)\rightarrow \mathbb{C}$ satisfy that $\phi_1^{\prime}$ and $\phi_2^{\prime}$ exist and are non-zero for every point of the segments $(a_1,b_1)$ and $(a_2,b_2)$ respectively
\end{enumerate}
We also assume that a function $\phi: W_1\cup J_1\rightarrow \mathbb{C}$ is injective, continuous on $W_1\cup J_1$ and holomorphic in $W_1$ and we denote  $G:=\phi(W_1)$. We suppose that $\phi^{\prime}$ can be extended in $W_1\cup J_1$ continuously. Let $\gamma=\phi(J_1)$ and let a function $F:W_2\rightarrow \mathbb{C}$ be continuous on $W_2\cup J_2$ and holomorphic on $W_2$. Suppose that for all z in $\phi(J_1)$, there exists a $\epsilon > 0$ so that $D(z,\epsilon)\cap F(W_2)\subseteq G$ and if a sequence $(z_n)_{n\in \mathbb{N}}, z_n\in W_2$ for all $n\in \mathbb{N}$ accumulates only in $J_2$, then $F(z_n)$ accumulates only in $\gamma$ and $F(J_2)\subseteq \gamma = \phi(J_1)$.\\
Then: 

\begin{enumerate}[(a)]

\item The function $F^\prime$, can be continuously extended in $W_2\cup J_2$.
\item If $\phi^\prime(z)\neq 0 $  for all $z$ in $J_1$ and $F$ is injective in $W_2$ then $F^\prime(\zeta)\neq 0$  for all $\zeta$ in $J_2$ 
\end{enumerate}
\end{theorem}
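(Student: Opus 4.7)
The plan is to follow the outline in the introduction: define $h := \phi^{-1}\circ F$ on a one-sided neighbourhood of $J_2$, use the Schwarz reflection principle to extend $h$ holomorphically across $J_2$, and then read off the conclusions from the chain-rule identity $F' = (\phi'\circ h)\,h'$.

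First I would verify that, near each $\zeta_0\in J_2$, $h$ is well defined. Since $F(\zeta_0)\in\gamma$ and $F$ is continuous, the hypothesis $D(F(\zeta_0),\varepsilon)\cap F(W_2)\subseteq G$ gives $F(\zeta)\in G$ for $\zeta\in W_2$ close to $\zeta_0$; combined with $F(J_2)\subseteq\gamma$ and the injectivity of $\phi$ on $W_1\cup J_1$, this makes $h = \phi^{-1}\circ F$ well defined and continuous on a one-sided neighbourhood of $\zeta_0$, holomorphic on its $W_2$-interior, and mapping its $J_2$-piece into $J_1$.

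For the reflection step, Definition 2.1 provides injective holomorphic extensions of $\phi_1,\phi_2$ to complex disks around real points $t_i$ with $\phi_1(t_1)=h(\zeta_0)$ and $\phi_2(t_2)=\zeta_0$. Condition (2) ensures, after shrinking the disks $U_i$, that each open half of $U_i$ is mapped by $\phi_i$ onto one side of $J_i$, with one side being $W_i$; I label so that the upper half of $U_2$ goes into $W_2$. Then
\[
H := \phi_1^{-1}\circ h \circ \phi_2
\]
is well defined on the closed upper half-disk around $t_2$ (after a further shrink), holomorphic on the open upper half-disk, continuous on its closure, and sends the real diameter into $\mathbb{R}$. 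By Schwarz reflection, $H$ extends to a holomorphic function on the full disk at $t_2$; transporting back, $h=\phi_1\circ H \circ \phi_2^{-1}$ extends holomorphically to a two-sided neighbourhood of $\zeta_0$ in $\mathbb{C}$.

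Part (a) is then immediate: $h'$ is continuous on $J_2$ because $h$ is holomorphic across it, and $\phi'\circ h$ is continuous on $W_2\cup J_2$ near $J_2$ because $\phi'$ extends continuously to $W_1\cup J_1$ and $h(\zeta_0)\in J_1$; the identity $F' = (\phi'\circ h)\,h'$, valid in $W_2$, therefore extends $F'$ continuously to $J_2$. For (b), the injectivity of $F$ and of $\phi$ makes $h$ injective on the $W_2$-side of $\zeta_0$, hence $H$ is injective on the open upper half-disk. Were $H'(t_2)=0$, the holomorphic extension of $H$ would be locally $k$-to-$1$ for some $k\ge 2$ at $t_2$, contradicting the one-sided injectivity already established; so $H'(t_2)\ne 0$, which yields $h'(\zeta_0)\ne 0$ and, together with $\phi'(h(\zeta_0))\ne 0$, $F'(\zeta_0)\ne 0$.

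The hard part I expect is the one-sided geometric set-up: confirming from conditions (1)--(2) and the sequential hypothesis on $F$ that, after shrinking and appropriate labelling, $H$ is genuinely a map of a closed half-disk into $\mathbb{C}$ that carries the diameter into $\mathbb{R}$. Once this is arranged the Schwarz reflection and chain-rule steps close the argument almost formally.
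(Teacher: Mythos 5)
Your overall route is the same as the paper's: write $F=\phi\circ h$ with $h=\phi^{-1}\circ F$, check that $h$ is well defined on a one-sided neighbourhood of each point of $J_2$ and sends $J_2$ into $J_1$, extend $h$ across $J_2$ by reflection, and read off (a) and (b) from $F'=(\phi'\circ h)\,h'$. The only structural difference is that the paper invokes the reflection principle for analytic arcs as a black box from Ahlfors, whereas you re-derive it by conjugating with the analytic parametrizations $\phi_1,\phi_2$ and applying Schwarz reflection to $H=\phi_1^{-1}\circ h\circ\phi_2$ on a half-disk; that is a legitimate and more self-contained presentation of the same idea, and your well-definedness discussion for $h$ is actually more careful than the paper's.

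There is, however, one step in your part (b) that does not work as stated. You argue that if $H'(t_2)=0$ then $H$ is locally $k$-to-$1$ with $k\ge 2$, ``contradicting the one-sided injectivity already established.'' For $k=2$ there is no contradiction: the map $w\mapsto w^2$ is locally $2$-to-$1$ at $0$, carries $\mathbb{R}$ to $\mathbb{R}$, and is nevertheless injective on the open upper half-disk, because the two preimages of any nearby value are antipodal and so lie on opposite sides of the diameter. To close the gap you must use more than one-sided injectivity. For example: $H$ maps the open upper half-disk into one open half-plane (since $h$ maps $W_2$ into $W_1$ and $\phi_1^{-1}$ carries $W_1$ to one side of the real axis), and its Taylor coefficients at $t_2$ are real by the reflection, so a leading term $a_k(w-t_2)^k$ with $k\ge 2$ would open the angle $\pi$ at $t_2$ to $k\pi>\pi$ and spill the image into the other half-plane. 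Equivalently, one can note that the reflected extension of $h$ sends the two sides of $J_2$ into the two disjoint sides of $J_1$ and sends $J_2$ into $J_1$, so the extension is injective on a full two-sided neighbourhood of $\zeta_0$ and therefore has nonvanishing derivative; this is essentially the (also rather terse) justification the paper itself gives for the assertion that $(\phi^{-1}\circ F)'\neq 0$ on $J_2$.
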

\begin{proof}
\noindent
First we prove statement (a).
\\
 Let $z\in J_2$. There exists a $ \delta_z >0 $ such that $ D(z,\delta_z)\cap (\partial W_2 \setminus J_2)=\emptyset$. Therefore, the function  $$ h:=\phi^{-1}\circ F:D(z,\delta_z)\cap W_2\rightarrow W_1$$ is well defined. The function $h$ is holomorphic, since F is holomorphic in $W_2$. The function $$\phi^{-1}:\phi(W_1)\rightarrow W_1$$ is holomorphic also holomorphic in $\phi(W_1)$ and if $\delta_z > 0$ is small enough, then $F\big(D(z,\delta_z)\cap W_2\big)\subseteq \phi(W_1)$, for every $z\in \gamma$.\\
 We assume that $\{z_n, n \in\mathbb{N}\} $ is a sequence of points in $W_2$. We are going to prove that if all the accumulation points of $\{z_n\}_{n\in \mathbb{N}}$ belong in $J_2$, then all the accumulation points of sequence $b_n:=h(z_n),n\in\mathbb{N}$ belong in $J_2$. From our hypothesis it follows that $F(z_n),n\in \mathbb{N}$ accumulates in $\gamma$. It suffices to prove that \{$\phi^{-1}\circ F(z_n), n\in \mathbb{N}\}$ accumulates in $J_1$. If we suppose that there is an accumulation point $w$ of the sequence $b_n:=\phi^{-1}\circ F(z_n)$ that does not belong in $J_1$, then ,there exists a subsequence $\{b_{k_n}, n\in \mathbb{N}\}$ that converges to $w\in W_1$. Thus, $\phi( b_{k_n})$ defines a subsequence of $F(z_n)$ in $\phi(W_1)$. Since $\phi$ is continuous on $W_1\cup J_1$,then $\phi(z_{k_n})$ converges to $\phi(w)$. Since all accumulation points of $F(z_n)$ belong in $\gamma$, therefore $\phi(w)$ belongs in $\gamma$. Thus, there exists an $z_1 \in J_1$ such as $\phi(z_1)=\phi(w)$. This is impossible since $z_1$ is on the boundary of $W_1$ and $w$ inside of $W_1$ and $\Phi$ is injective on $W_1\cup J_1$. 
  \\ Therefore if $(z_n)_n$ accumulates on $J_2$,  $h(z_n)$ accumulates  on $J_1$.\noindent
  \\
  \\
Finally, $h$ is holomorphic on $D(z ,\delta_z)\cap W_2$ and continuous on $\overline {D(z ,\delta_z)\cap W_2}$. According to the Reflection Principle for analytic arcs \cite{[1]}, $h $ is holomorphic  in $D(z,\delta_z')$for some $\delta_z', 0<\delta_z'\leq \delta_z$. Therefore $F=\phi\circ\phi^{-1}\circ F$ is continuous in $J_2$ since $\phi$ can be continuously extended in $J_1$.\\
Considering  $F$ restricted in ${ D(z,\delta_z)\cap W_2}$ it is analytic and $$F^\prime(z)=[\phi\circ(\phi^{-1}\circ F)]^\prime(z)=\phi^\prime\circ(\phi{-1}\circ F)(z)\cdot(\phi^{-1}\circ F)^\prime(z)$$ $$\forall z\in D(z,\delta_z)\cap W_2$$
Since $\phi\prime$ is continuous in $J_1$  and from the Reflection Principle $(\phi^{-1}\circ F)^\prime$ is holomorphic in $D(z,\delta_z)$, therefore $F^\prime$ is continuous on $\overline{D(z,\delta_z)\cap W_2}$.
\\

The above statements are valid for all $z\in 
 J_2$. From analytic continuation the extensions of $h \in D(z_i,\delta_{z_i}')\cap D(z_j,\delta_{z_j}')\cap W_2$ match, thus they also match in $D(z_i,\delta_{z_i}')\cap D(z_j,\delta_{z_j}')$, for any $z_1, z_2 \in J_2$\\
 Therefore $F^\prime$ can be continuously extended in $J_2$\\
 \\
 Next we prove statement (b):
\\
\break
 We have already that $F^\prime=[\phi\circ(\phi^{-1}\circ F)]^\prime=\phi^\prime\circ(\phi^{-1}\circ F)\cdot(\phi^{-1}\circ F)^\prime$\\
Let $z\in J_2$. Since for all $ z_n$ accumulating in $J_2$, the sequence $F(z_n)$ accumulates in $\gamma$, it follows that $\phi^{-1}\circ F(z) \in J_1$. Assuming that $\phi^\prime \neq 0$, for all z in $J_1$, it follows that $\phi^\prime\circ(\phi^{-1}\circ F)(z)\neq 0$ for all $z$ in $J_2$.\\
From the Reflection Principle, the function $h=\phi^{-1}\circ F$ can be extended in an  neighbourhood of $J_2$  and the extension is injective on it; thus $(\phi^{-1}\circ F)^\prime\neq 0$, for all $z$ in $J_2 $.Therefore $ F^\prime (z)\neq 0$ for all $z\in J_2$. This completes the proof.
\\

\end{proof}
\begin{remark}
Let suppose that there exists $z$ in $J_1$ such that $\phi^\prime(z)\neq 0$ and not for all points in $J_1$. Therefore, there exists $\zeta$ in $J_2$ such that $\phi^{-1}\circ F(\zeta)=z$ in $J_1$ and $F(\zeta)\neq 0$ and reversely.

\end{remark}
\begin{lemma}
Let $W\subseteq \mathbb{C}$ and $\Omega\subseteq\mathbb{C}$ open sets. Let $\phi: W\rightarrow \mathbb{C}$ and $h:\Omega\rightarrow W$ holomorphic functions. We assume that $P$ is a finite linear combination of products of the functions $\phi^{(n)}\circ h, \phi^{(n-1)}\circ h,..., \phi\circ h$ and $h^{(n)},..., h$, for some $n$ in $\mathbb{N}$. Then $P$ is holomorphic on $\Omega$ and its derivative is a polynomial of the functions $\phi^{(n)}\circ h, \phi^{(n-1)}\circ h ,...,\phi\circ h$ and $h^{(n)},..., h$ and $\phi^{(n+1)}\circ h, h^{(n+1)}$
\end{lemma}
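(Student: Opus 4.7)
The plan is to treat this as a bookkeeping statement about how the product and chain rules interact with the algebra generated by the listed functions. The first assertion, that $P$ is holomorphic on $\Omega$, is immediate: since $\phi$ and $h$ are holomorphic, each derivative $\phi^{(k)}$ is holomorphic on $W$ and each $h^{(k)}$ is holomorphic on $\Omega$; because $h(\Omega)\subseteq W$, each composition $\phi^{(k)}\circ h$ is holomorphic on $\Omega$. Products and linear combinations of holomorphic functions are holomorphic, which settles this part.

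For the derivative, by linearity of differentiation it suffices to differentiate a single product of the form
\[
M=\bigl(\phi^{(k_1)}\circ h\bigr)\cdots\bigl(\phi^{(k_r)}\circ h\bigr)\cdot h^{(j_1)}\cdots h^{(j_s)},
\]
with $0\le k_i\le n$ and $0\le j_i\le n$ (writing $h^{(0)}=h$). The Leibniz product rule expresses $M'$ as a sum of $r+s$ terms, each obtained by differentiating exactly one factor and leaving the others unchanged.

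Two elementary computations then dispose of the individual derivatives. For a factor $h^{(j)}$ with $j\le n$, one has $(h^{(j)})'=h^{(j+1)}$, which lies in $\{h,h',\ldots,h^{(n+1)}\}$. For a factor $\phi^{(k)}\circ h$ with $k\le n$, the chain rule gives $(\phi^{(k)}\circ h)'=(\phi^{(k+1)}\circ h)\cdot h'$, where $\phi^{(k+1)}\circ h$ lies in $\{\phi\circ h,\ldots,\phi^{(n+1)}\circ h\}$ and $h'$ already belongs to the original list. Consequently every term produced by differentiating $M$ is itself a product of functions drawn from the enlarged family $\{\phi^{(k)}\circ h\}_{0\le k\le n+1}\cup\{h^{(j)}\}_{0\le j\le n+1}$. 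Summing these contributions over all product terms in the original linear combination for $P$ expresses $P'$ as a polynomial in the enlarged family, which is exactly what is required.

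There is no real obstacle here; the difficulty is purely notational. The key observation is simply that neither the chain rule nor the product rule can push the order of derivative of $\phi$ or of $h$ up by more than one, so a single differentiation of $P$ enlarges the index range by at most one on each side, matching the statement of the lemma.
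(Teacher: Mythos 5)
Your proof is correct and takes essentially the same approach as the paper: both reduce to differentiating a single monomial in the listed functions and observe that the product rule together with the chain rule $(\phi^{(k)}\circ h)'=(\phi^{(k+1)}\circ h)\,h'$ raises each derivative order by at most one. The paper packages this as an induction on the number of factors, whereas your direct Leibniz expansion of a general monomial is cleaner and avoids the notational slips in the paper's inductive step.
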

\begin{proof}
We will use induction to prove this lemma. We start from $$P(z)=(\phi\circ h)^{k_1} h^{k_2} (z)$$, then
$$P^\prime(z)=k_1(\phi^\prime\circ h)h^\prime h^{k_2}+ k_2(\phi\circ h^{k_1})h^{k_2-1}h^\prime$$
For the inductive step we assume $$P=c[ \phi^{(n-1)}\circ h ^{k_{n^2}}\dots \phi\circ h^{k_{n+1}}{h^{(n)}}^{k_n}\dots h^{k_1}], c \in \mathbb{C}$$
Without loss of the generality, we also assume that $k_{2n}=\dots =k_1=c=1$, hence,
$$P^\prime=(\phi^{(n)}\circ h)h^\prime[\phi^{(n-1)}\circ h)\dots (\phi^{-1}\circ F)]$$ $$ + (\phi^{(n-1)}\circ h)^2 h^\prime (\phi^{(n-2)}\circ h\dots h)$$$$ +\dots + [\phi^\prime \circ h]^{(n+1)}([\phi^{(n)}\circ h]\dots h)$$$$+ h^{(n+1)}([\phi^{(n)}\circ h])+\dots +h([\phi^{(n)}\circ h]\dots h^\prime) $$
Therefore $P^\prime$ is a polynomial of $(\phi^{(n)}\circ h)^{(k)}, h^{(k)}, k=0,..., n+1$ 
\end{proof}
\begin{corollary}
 According to assumptions and notation of Theorem 2.2, if $\phi$ is an analytic function in $W_1$ and continuous on $J_1$ and we also assume that $\phi^{(k)}$ can  be continuously extended on $J_1$ for $k=0,1,...,p$, where p is a natural number, then $F^{(k)}$ can be continuously extended on $W_2\cup J_2$ for $k=0,1,...,p$.\\ 
 \end{corollary}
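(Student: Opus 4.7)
The plan is to proceed by induction on $k$, exploiting the factorization $F = \phi \circ h$ with $h = \phi^{-1}\circ F$ that was produced in the proof of Theorem 2.2, together with Lemma 2.4. The base case $k=0$ is already contained in Theorem 2.2, which asserts that $F$ extends continuously to $W_2\cup J_2$. For the inductive step I fix a point $z_0\in J_2$ and work in the disk $D(z_0,\delta'_{z_0})$ on which the Reflection Principle produces a full holomorphic extension of $h$ with $h(J_2\cap D(z_0,\delta'_{z_0}))\subseteq J_1$. In particular, $h$ together with all its derivatives $h^{(j)}$ are holomorphic, hence continuous, across $J_2$ at $z_0$, so the ``$h$-side'' of any chain rule expansion causes no boundary trouble.

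To compute $F^{(k)}$ in a form where Lemma 2.4 applies, I would differentiate the relation $F = \phi\circ h$ on $D(z_0,\delta'_{z_0})\cap W_2$ and repeatedly use the chain rule. An elementary induction, or a direct appeal to Lemma 2.4 with $n=k$, shows that $F^{(k)}$ is a finite linear combination of products of terms of the form $\phi^{(j)}\circ h$ and $h^{(j)}$ with $0\leq j\leq k$. Since $k\leq p$, the hypothesis provides a continuous extension of $\phi^{(j)}$ to $W_1\cup J_1$ for every such $j$, and $h$ itself extends continuously (in fact holomorphically) across $J_2$ with values that, on $J_2$, lie in $J_1$. Therefore every factor $\phi^{(j)}\circ h$ extends continuously to $J_2\cap D(z_0,\delta'_{z_0})$, and finite sums and products of such continuous functions yield a continuous extension of $F^{(k)}$ across $J_2$ near $z_0$.

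The main obstacle is verifying the mapping property needed to make the composition $\phi^{(j)}\circ h$ well-defined and continuous up to the boundary: as $z\in W_2$ approaches $J_2$, one must know that $h(z)$ stays inside the domain $W_1\cup J_1$ on which the continuous extension of $\phi^{(j)}$ lives, and that $h(z)$ approaches $J_1$ rather than some interior point of $W_1$. This is precisely the accumulation statement established within the proof of Theorem 2.2, so it can be invoked directly. The only remaining care concerns globalisation: the continuous extensions of $F^{(k)}$ obtained on disks $D(z_0,\delta'_{z_0})\cap W_2$ for different $z_0\in J_2$ must agree on overlaps, but since they all equal the single function $F^{(k)}$ on $W_2$, this compatibility is automatic, and the extensions patch into a continuous function on $W_2\cup J_2$, completing the induction for $k=0,1,\dots,p$.
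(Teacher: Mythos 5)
Your proposal is correct and follows essentially the same route as the paper, which likewise deduces the corollary by combining Theorem 2.2 (giving the holomorphic extension of $h=\phi^{-1}\circ F$ across $J_2$ with $h(J_2)\subseteq J_1$) with Lemma 2.4 (expressing $F^{(k)}$ as a polynomial in the $\phi^{(j)}\circ h$ and $h^{(j)}$). Your write-up is in fact more detailed than the paper's two-line justification, in particular in making explicit the mapping property of $h$ and the patching of the local extensions.
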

  
 This holds because $F^{(k)}$ is a polynomial of functions that can be continuously extended on $J_2$. This follows combining Theorem 2.2 with lemma 2.4.\\
(for more details see \cite{[3]})\\
\break

The Theorem 2.2 implies, also, the following corollary. 

\begin{corollary}
We suppose that $D(0,r,1) ,0<r<1$ is an open annulus and $\phi: \overline{D(0,r,1)}\rightarrow\mathbb{C}$ is continuous, injective on $\overline{D(0,r,1)}$ and holomorphic on $D(0,r,1)$. We also assume that $\phi^\prime$ has a continuous extension on $\overline {D(0,r,1)}$. Furthermore, we set $\gamma(e^{i\theta}):= \phi(e^{i\theta})$ and $W$ is the interior of the Jordan curve $\gamma$.
\\We, finally, assume that $\phi(D(0,r,1))\subseteq W$ and the function 
$$F:D(0,1)\rightarrow W$$ 
is a conformal mapping of $D:= D(0,1)$ onto W. It is known that according to the Ozgood-Caratheodory theorem ,$F$ is exteded to a homeomorphism $F: \overline{D(0,1)}\rightarrow \overline{W}$. From the Theorem 2.2, it follows that\\
\begin{enumerate}
\item The function $F^{\prime}$ extends continuously on $\overline D$
\item If $\phi^\prime(z)\neq 0$, for some $z$ with $|z|=1$ then\\  $F^\prime(\zeta)\neq 0$ , for $\zeta$ such that $(\phi^{-1}\circ F)(\zeta)=z$
\end{enumerate}
\end{corollary}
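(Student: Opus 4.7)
The plan is to deduce Corollary 2.6 from Theorem 2.2 by applying the latter locally along short sub-arcs of $T$. Fix $e^{i\theta_{0}}\in T$, let $\tau_{0}$ be a short open arc of $T$ containing $e^{i\theta_{0}}$, and set $W_{1}:=D(0,r,1)$, $J_{1}:=\tau_{0}$, $W_{2}:=D(0,1)$, $J_{2}:=\tau_{0}$, with $\phi$ and $F$ as in the statement. Conditions (1)--(3) of Theorem 2.2 are immediate for these choices: $\partial W_{1}\setminus T=\{|z|=r\}$ and $\partial W_{2}\setminus T=\emptyset$ are at positive distance from $\tau_{0}$; the unit circle locally separates any small concentric disk into two half-disks (one in $W_{i}$, one outside); and $\theta\mapsto e^{i\theta}$ is an analytic, nonvanishing parametrization.

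\textbf{Hypotheses on $\phi$ and $F$.} The assumptions on $\phi$ are part of the statement. For $F$, the Osgood--Carath\'eodory theorem provides a homeomorphic extension $\overline{D}\to\overline{W}$ with $F(T)=\gamma$; thus $F$ is continuous on $W_{2}\cup J_{2}$ and holomorphic in $W_{2}$, $F(J_{2})\subset\gamma$, and any sequence $(z_{n})\subset D$ accumulating only in $\tau_{0}$ has $F(z_{n})$ accumulating only in $F(\tau_{0})\subset\gamma$.

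\textbf{The local covering condition.} The non-trivial hypothesis of Theorem 2.2 still to verify is: for every $w\in\phi(\tau_{0})$ there exists $\epsilon>0$ with $D(w,\epsilon)\cap F(W_{2})\subset\phi(W_{1})$, i.e.\ $D(w,\epsilon)\cap W\subset\phi(D(0,r,1))$. Since $\phi$ is a continuous injection of the compact set $\overline{D(0,r,1)}$, the image $C:=\phi(\{|z|=r\})$ is a Jordan curve lying in $W$ and disjoint from $\gamma$; set $d:=\mathrm{dist}(\gamma,C)>0$. A short open/closed argument (using openness of $\phi(D(0,r,1))$ by the open mapping theorem, continuity of $\phi$ on the annulus closure, and injectivity on $\overline{D(0,r,1)}$) identifies $\phi(D(0,r,1))$ as exactly the component of $W\setminus C$ whose boundary meets $\gamma$. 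Any point $p$ of the other component of $W\setminus C$ is enclosed by $C$, so every Euclidean segment from $p$ to a point of $\gamma$ must cross $C$ by the Jordan curve theorem, giving $|p-\gamma|\geq d$. Consequently, for $\epsilon<d$ and $w\in\gamma$, every $w'\in D(w,\epsilon)\cap W$ satisfies $|w'-\gamma|<d$, so $w'$ is neither on $C$ nor in the inner component, forcing $w'\in\phi(D(0,r,1))$ as required.

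\textbf{Conclusion and main obstacle.} With all hypotheses of Theorem 2.2 verified, part (a) yields that $F^{\prime}$ extends continuously on $W_{2}\cup\tau_{0}$, and Remark 2.3 -- the pointwise version of part (b), applicable because the Riemann map $F$ is injective -- gives $F^{\prime}(\zeta)\neq 0$ whenever $\phi^{\prime}$ is nonzero at the corresponding point $(\phi^{-1}\circ F)(\zeta)\in\tau_{0}$. Since $e^{i\theta_{0}}$ was arbitrary, covering $T$ by finitely many arcs $\tau_{0}$ and invoking uniqueness of continuous extensions yields claim (1); claim (2) is then precisely the pointwise statement at the specified $z$. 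I expect the main obstacle to be the local covering condition, specifically the topological identification of $\phi(D(0,r,1))$ as the outer component of $W\setminus C$ together with the distance estimate based on $d$; the remaining verifications are a routine translation of the setup of Corollary 2.6 into the hypotheses of Theorem 2.2.
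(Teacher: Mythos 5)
Your overall strategy is the right one (and the only one available: the paper itself offers no further argument beyond ``apply Theorem 2.2,'' so the localization to arcs, the verification of the hypotheses, and especially the covering condition $D(w,\epsilon)\cap W\subseteq\phi(D(0,r,1))$ are exactly the details that need supplying). Your treatment of the covering condition is sound: $C:=\phi(\{|z|=r\})$ is a Jordan curve in $W$ disjoint from $\gamma$ by injectivity of $\phi$ on the closed annulus, the open/closed argument identifies $\phi(D(0,r,1))$ with the outer component of $W\setminus C$, and the segment-crossing estimate gives the distance bound. That part I would accept as written.

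There is, however, one genuine error in the setup: you take $J_1:=\tau_0$ and $J_2:=\tau_0$ to be the \emph{same} arc of $T$. Theorem 2.2 requires $F(J_2)\subseteq\gamma=\phi(J_1)$ and that sequences accumulating on $J_2$ have images accumulating on $\phi(J_1)$; with your choice this becomes $F(\tau_0)\subseteq\phi(\tau_0)$, which is false in general, because $F|_T$ and $\phi|_T$ are two different parametrizations of the Jordan curve and $\phi^{-1}\circ F$ is an arbitrary homeomorphism of $T$, not the identity. (The slip is visible at the end, where you write $(\phi^{-1}\circ F)(\zeta)\in\tau_0$ for $\zeta\in\tau_0$; this point need not lie in $\tau_0$.) Your verification ``$F(J_2)\subset\gamma$'' silently replaces $\gamma=\phi(J_1)=\phi(\tau_0)$ by the whole Jordan curve. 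The repair is easy but must be made explicit: keep $J_2=\tau_0$ with $\overline{\tau_0}$ a proper closed subarc of $T$, note that $\phi|_T$ is a homeomorphism onto the Jordan curve so that $\phi^{-1}\bigl(F(\overline{\tau_0})\bigr)$ is a closed proper subarc of $T$, and take $J_1$ to be a slightly larger open arc containing it; then $F(J_2)\subseteq\phi(J_1)$ and the accumulation hypothesis hold, and the rest of your argument (including the appeal to Remark 2.3 for the pointwise statement (2) and the patching over a finite cover of $T$ for statement (1)) goes through unchanged.
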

More generally, if a conformal collar of a Jordan curve has some nice properties, then the same holds for any other conformal collar of the same curve from the same side.

National and Kapodistrian University of Athens,\\ Department of Mathematics \\15784 \\Panepistemiopolis\\ Athens GREECE
\\
e-mail: lvda20@hotmail.com\\
e-mail: vnestor@math.uoa.gr


\begin{thebibliography}{3}
\bibitem{[1]}Ahlfors, Complex analysis,Second edition ,McGraw-Hill,New York,1966
\bibitem{[2]}Steven R.Bell and Steven G Krantz, Smoothness to the boundary of conformal maps, Rocky Mountain Journal of mathematics, Volume 17,1987, Number 1 ,23-40
\bibitem{[3]}E.Bolkas,V.Nestoridis,C. Panagiotis and M.PapadimitrakisOne sided extendability and p-continuous analytic capacities,arxiv : 1606.05443
\bibitem{[4]}Gauthier,P.M., Nestoridis V., Conformal extensions of functions defined on arbitrary subsets of Riemann Surfaces. Arch. Math. ( Basel ) 104 (2015 )no 1  , 61-67.
\bibitem{[5]}Georgakopoulos N. ,Extensions of the Laurent Decomposiotion and the spaces $A^p(\Omega)$, arxiv:1605.08289 
\bibitem{[6]}Koosis P. ,An introduction to $H_p$ spaces, Cambridge University Press, 1998
\bibitem{[7]}V.Mastrantonis, Relations of the spaces $A^{p}(\Omega)$ and $C^{p}(\partial\Omega)$, arxiv:1611.02971 
\end{thebibliography}
\end{document}